\newtheorem{theorem}{Theorem}[section]
\newtheorem{proposition}[theorem]{Proposition}
\newtheorem{lemma}[theorem]{Lemma}
\newtheorem{corollary}[theorem]{Corollary}
\newtheorem{example}{Example}
\newtheorem{question}{Question}
\def\F{\mathcal{F} }
\def\S{\mathbb{S} } 
\def\T{\mathbb{T} } 
\def\F{\mathcal{F} }
\def\R{\mathbb{R} } 
\def\Z{\mathbb{Z} } 
\def\nbd{neighborhood } 
\def\R{\mathbb{R} }
\title[Recurrent and Non-wandering properties for foliations]
{Recurrent and Non-wandering properties for foliations} 
\author{Tomoo Yokoyama}
\date{\today}
\address{Department of Mathematics, Kyoto University of Education/JST PRESTO, 
1 Fujinomori, Fukakusa, Fushimi-ku Kyoto, 612-8522, Japan \\
}
\email{tomoo@kyokyo-u.ac.jp}
\thanks{The author is partially supported
by the JST PRESTO  Grant Number JPMJPR16E 
and by Grant for Basic Science Research Projects from The Sumitomo Foundation at Department of Mathematics, Kyoto University of Education.}
\begin{document}

\maketitle

\begin{abstract}
In this paper, 
we define the recurrence and ``non-wandering'' for decompositions. 
The following inclusion relations hold for codimension one foliations 
on closed $3$-manifolds: 
$\{$minimal$\} \sqcup \{$compact$\}$ 
$\subsetneq$ 
$\{$pointwise almost periodic$\}$   
$\subsetneq$ 
$\{$recurrent$\}$ 
$\subsetneq$ 
$\{$non-wandering$\}$ 
$\subsetneq$ 
$\{$Reebless$\}$. 
A non-wandering codimension one  $C^2$ foliation on a closed connected $3$-manifold 
which has no leaf with uncountably many ends 
is minimal (resp. compact) if and only if it has no compact (resp. locally dense) leaves. 
In addition, 
the fundamental groups of all leaves of 
a codimension one transversely orientable $C^2$ foliation $\F$ on a closed $3$-manifold 
have the same polynomial growth
if and only if 
$\F$ is without holonomy and has a leaf whose fundamental group has polynomial growth. 
\end{abstract}

\section{Introduction and preliminaries}

In 1927, 
Birkhoff introduced the concepts of non-wandering points and 
recurrent points \cite{Bi}. 
Using these concepts, 
we can describe and capture 
sustained or stationary dynamical behaviors 
and 
conservative dynamics. 
In \cite{Th}, 
it has shown that 
a closed manifold $M$ has a smooth codimension one foliation 
if and only if the Euler characteristic of $M$ is zero. 
In particular, 
each three dimensional closed manifold has 
codimension one smooth foliation. 
In this paper, 
we define the recurrence and ``non-wandering'' for decompositions.  
As usual dynamical systems, 
the following relations hold for a decomposition:
\begin{center}
pointwise almost periodic 
$\subsetneq$ 
recurrent 
$\subsetneq$ 
non-wandering.  
\end{center} 
In particular, the inclusions hold for 
codimension one  foliations  on closed $3$-manifolds.  
Moreover, 
let $\F$ be a codimension one foliation on a closed $3$-manifold $M$.   
If $\F$ is non-wandering, then 
$\F$ is $\pi_1$-injective and so Reebless.  
Therefore 
there are no codimension one non-wandering foliations on some closed $3$-manifolds.  
%
%
%
For a codimension one transversely orientable $C^2$ foliation $\F$ on a closed $3$-manifold, 
the fundamental groups of all leaves of $\F$ have the same polynomial growth
if and only if 
$\F$ is without holonomy and has a leaf whose fundamental group has polynomial growth. 


\subsection{Topological notions}

A point $x$ of a topological space $(X, \tau)$ is said to be 
$T_1$ if 
the singleton $\{ x \}$ is closed, 
$T_0$ if 
for any points $y \neq x \in X$, 
there is no open subset $U$ of $X$ 
such that 
$\{x, y \} \cap U$ is a singleton, 
$T_D$ \cite{AT} if $\overline{\{ x \}} - \{ x \}$ is closed, 
($\tau$-)recurrent if 
$\{ x \}$ is $T_1$ or non-$T_D$, 
and   
($\tau$-)wandering if 
there is an open \nbd of $\{ x \}$ which consists of non-recurrent (i.e. non-$T_1$ $T_D$) elements, 
and   
($\tau$-)non-wandering if 
it is not ($\tau$-)wandering 
(i.e. there is no open \nbd of $\{ x \}$ which consists of non-recurrent (i.e. non-$T_1$ $T_D$) elements). 
Note that 
the set of recurrent points  of a flow on a complete metric space 
is dense in 
the set of non-wandering points 
(Theorem III.2.12, III.2.15 \cite{BS}). 
%
For a point $x$ of a topological space $(X, \tau)$, 
define the (point) class $\hat{x}$ by 
$\hat{x} : = \{ y \in X \mid \overline{\{ y \}} = \overline{\{ x \}} \}$.  
The quotient space by the classes is called the class space 
and denoted by $\hat{X}$. 
A point $x \in X$ is $S_1$ if 
the class $\hat{x}$ is $T_1$ with respect to $\hat{X}$.  
A topological space is $T_0$ (resp. $T_D$, $S_1$) if 
so is each point. 
Note that the $T_1$ (resp. recurrent) property implies the $T_D$ (resp. non-wandering) property. 
%
By the definition of properness, we have that 
a foliation $\F$ on a manifold $M$ is proper 
if and only if 
the leaf space $M/\F$ is $T_D$ with respect to the quotient topology $\tau_{\F}$. 
Moreover, a leaf is proper if and only if it is $T_D$ with respect to $\tau_{\F}$. 

\subsection{Decompositions}

By a decomposition, 
we mean a family $\mathcal{F} $ of pairwise disjoint nonempty subsets of a topological space $(X, \tau)$ 
such that $X = \sqcup \mathcal{F}$, 
where $\sqcup$ is the disjoint union symbol. 
For $L \in \F$, 
we call that $L$ is a proper element if 
$L$ is $T_D$ with respect to $\tau_{\F}$, 
and 
a recurrent element (resp. a wandering element, a non-wandering element) if 
it is $\tau_{\F}$-recurrent (resp. $\tau_{\F}$-wandering, $\tau_{\F}$-non-wandering). 
A subset of $X$ is saturated if it is a union of elements of $\F$. 
Denote by $\mathrm{Cl}$ (resp. $\mathrm{P}$, $\mathrm{R}$)
the set of closed elements  
(resp. proper non-closed elements, non-closed recurrent elements). 
Then 
$\mathrm{P}$ is the complement of the union of recurrent elements,  
$\mathrm{R}$ is the complement of the set of the union of proper elements, 
and 
$X = \mathrm{Cl} \sqcup \mathrm{P} \sqcup \mathrm{R}$,  
where $\sqcup$ is the disjoint union symbol. 
%
Note that 
i) $L$ is a proper element  
if and only if 
$\overline{L} -L$ is closed; 
ii) $L$ is a recurrent  element  if and only if it is either closed or non-proper; 
iii) $L$ is a wandering element  if and only if 
$L \subseteq \mathrm{int}\mathrm{P}$; 
and 
iv) $L$ is a non-wandering element if and only if 
there is no open saturated \nbd of $L$ which consists of non-recurrent (i.e. non-closed proper) elements. 
%

\subsection{Notions of dynamical systems}
A decomposition $\F$ on a topological space $X$ is recurrent (resp. non-wandering) 
if 
so is each element. 
For any $x \in X$, 
denote by $L_x$ the element of $\mathcal{F} $ containing $x$. 
Recall that 
$\mathcal{F}$ is pointwise almost periodic  if 
the set $\hat{\F}$ of all closures of elements of $\mathcal{F} $ also is a decomposition. 
Note 
$\F$ is pointwise almost periodic 
if and only if 
the class space $X/\hat{\F}$ is $T_1$ 
(i.e. $X/\F$ is $S_1$), 
where 
$X/\hat{\F}$ is a quotient space $X/\sim$ 
defined as follows: 
$x \sim y$ if 
$\overline{L_x} = \overline{L_y}$. 
A decomposition  
$\mathcal{F}$ is $R$-closed if 
$R := \{ (x, y) \mid y \in \overline{L_x} \}$ is closed. 
By Corollary 1.4 \cite{Y}, 
$R$-closedness implies pointwise almost periodicity. 
By Lemma 2.2  \cite{Y}, 
a pointwise almost periodic decomposition $\F$ of a compact Hausdorff space $X$ 
is $R$-closed 
if and only if 
the class space $X/\hat{\F}$ is Hausdorff.

\subsection{Reeb components}

By a foliation, we mean a continuous foliation. 
A Reeb component of codimension one foliation on a $3$-manifold (resp. surface) is 
a solid torus whose boundary is a toral compact leaf and 
which consists of one toral leaf and planer leaves 
(resp. a closed annulus whose boundary consists of two circular compact leaves 
and which consists of two circular leaves and non-compact leaves as Figure \ref{Reeb}). 
A codimension one foliation on a $3$-manifold (resp. surface) is Reebless if 
there are no Reeb components. 

\begin{figure}
\begin{center}
\includegraphics[scale=0.43]{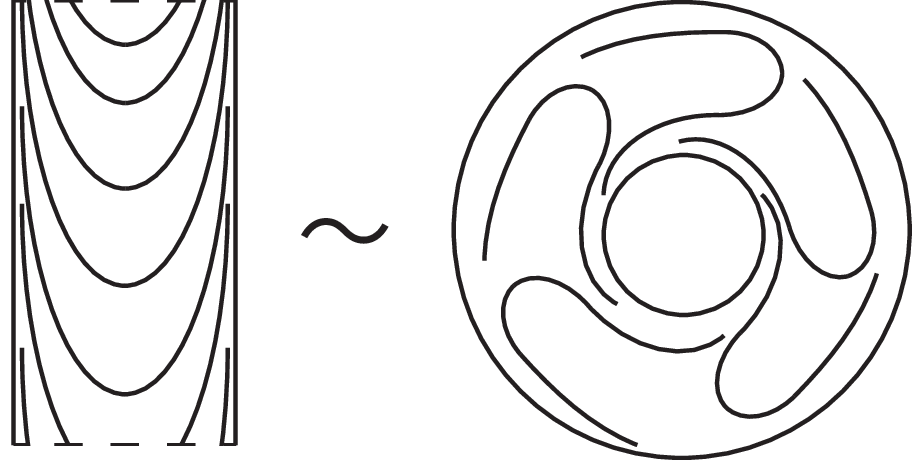}
\end{center} 
\caption{Two-dimensional Reeb component}\label{Reeb}
\end{figure}

\subsection{Vanishing cycle and $\pi_1$-injectivity}

A loop  $\gamma $ on a leaf $L$ of a foliation $\F$ on a manifold $M$ 
is called a vanishing cycle in the sense of Novikov 
if there is a mapping $F: S^1 \times [0,1] \to M$ such that 
arcs $F( x, [0,1])$ for every $x \in S^1$ are transverse to $\F$, 
each loop $F( S^1, t)$ for any $t \in [0,1]$ is contained in some leaf 
$L_t$,   
$[F( S^1 ,0)] = [\gamma ] \neq 1 \in \pi_1(L)$ and 
$[F( S^1 ,t)] = [\gamma] = 1 \in \pi_1(L_t)$ for all $t \in (0,1]$. 
A leaf $L$ is $\pi_1$-injective 
if 
the inclusion mapping $i : L \to M$ 
induces the injection $\pi_1(i): \pi_1(L) \to \pi_1(M)$ 
between the fundamental groups. 

\section{Properties of decompositions}

Let $\mathcal{F} $ be a decomposition of a topological space $X$. 
We show the following inclusion relations.

\begin{lemma}
$
\{ S_1\text{ point} \} \subsetneq 
\{ \text{recurrent point} \} \subsetneq 
\{ \text{non-wandering point} \} 
$
%
\end{lemma}

\begin{proof}
By definition, 
recurrence implies non-wandering property. 
Fix any $S_1$ point $y$ of a topological space. 
Since a closed point is recurrent, 
we may assume that $y$ is not closed. 
This implies that 
there is an element $x \in \overline{y} - \{ y \}$. 
Since $y$ is $S_1$, 
we obtain 
$\overline{\{x \}} = \overline{\{ y \}}$. 
Then $\overline{\{x \}} = \overline{\{ y \}} \supsetneq 
\overline{\{ y \}} - \{ y \} = \overline{\{x \}} - \{ y \} \supsetneq \{x \}$. 
Thus $\overline{\{ y \}} = \overline{\overline{\{ y \}} - \{ y \}}$ 
and so 
$\overline{\{ y \}} - \{ y \}$ is not closed. 
This shows that 
$y$ is non-$T_D$ and so recurrent. 
\end{proof}

The previous lemma can be interpreted as follows. 

\begin{proposition}\label{lem11}
A pointwise almost periodic decomposition is recurrent. 
\end{proposition}

\begin{proposition}\label{lem11}
A recurrent decomposition is non-wandering. 
\end{proposition}


In general, 
the converses of these inclusion relations are not true 
even if decompositions are codimension one foliations 
(see Example \ref{prop2.3} and \ref{prop2.4}). 
Notice that the Denjoy foliation on $\T^2$ is wandering but Reebless. 
We state a characterization of non-wandering property.

\begin{lemma}
An element $L$ of a decomposition $\F$ on a topological space $X$ is non-wandering point if and only if 
$L \not\subseteq \mathrm{int}\mathrm{P}$. 
\end{lemma}

\begin{proof}
Note that the set of recurrent elements is 
$\mathrm{Cl} \sqcup \mathrm{R} = X - \mathrm{P}$. 
Suppose that $L$ is wandering. 
Then there is an open saturated \nbd $U \subseteq \mathrm{P}$ of $L$ 
and so $L \subseteq \mathrm{int}\mathrm{P}$.  
Conversely, 
suppose that $L \subseteq \mathrm{int}\mathrm{P}$. 
Since $\mathrm{int}\mathrm{P}$ is an open saturated \nbd of $L$ 
which contains no recurrent elements, 
the element $L$ is wandering. 
\end{proof}

\begin{corollary}
A decomposition $\F$ on a topological space $X$ is non-wandering if and only if 
$\mathrm{int}\mathrm{P} = \emptyset$. 
\end{corollary}

\begin{proof}
If $\mathrm{int}\mathrm{P} \neq \emptyset$, then 
each element in $\mathrm{int}\mathrm{P}$ is wandering. 
Conversely, if $\F$ is wandering, then there is a wandering element $L$ contained in $\mathrm{int}\mathrm{P}$ 
and so $\mathrm{int}\mathrm{P} \neq \emptyset$. 
\end{proof}

\subsection{Properties of codimension one foliations}

For a codimension one foliation $\F$ on a manifold $M$, 
denote by 
$\mathrm{LD}$ 
the union of locally dense leaves 
and by 
$\mathrm{E}$ 
the union of exceptional leaves.  
Recall that $\mathrm{R}$ is the union of non-proper leaves. 
Then $\mathrm{R} = \mathrm{LD} \sqcup \mathrm{E}$ 
and so 
$M = \mathrm{Cl} \sqcup \mathrm{P} \sqcup \mathrm{LD} \sqcup \mathrm{E}$.   
We characterize non-wandering property. 

\begin{lemma}\label{lem13}
A codimension one foliation $\F$ on a closed manifold $M$ is non-wandering 
if and only if  
$\overline{\mathrm{R}} \supseteq M - \mathrm{Cl}$. 
\end{lemma}

\begin{proof}
Suppose that 
$\F$ is non-wandering. 
By definitions, 
we have 
$M = \mathrm{Cl} \sqcup \mathrm{P} \sqcup \mathrm{R}$. 
By Theorem4.1.3.V \cite{HH}, 
we have that 
$\mathrm{Cl}$ is closed 
and so that 
$
M - \overline{\mathrm{Cl} \sqcup \mathrm{R}} = 
M - (\mathrm{Cl} \cup \overline{\mathrm{R}}) = 
\mathrm{P}  \setminus \overline{\mathrm{R}}
$
is open.
%
Then  
$\mathrm{P} \setminus \overline{\mathrm{R}} \subset \mathrm{int} \mathrm{P} = \emptyset$ 
and so 
$M - \mathrm{Cl} = \mathrm{P} \sqcup \mathrm{R} \subseteq \overline{\mathrm{R}}$. 
Conversely, 
suppose that 
$\overline{\mathrm{R}} \supseteq M - \mathrm{Cl}$. 
Since $\mathrm{P} \sqcup \mathrm{R} = M - \mathrm{Cl} \subseteq 
\overline{\mathrm{R}}$, 
we have $\mathrm{P} \subset  \overline{\mathrm{R}} - \mathrm{R}$ and so 
$\mathrm{int} \mathrm{P} = \emptyset$. 
This means that 
$\F$ is non-wandering. 
\end{proof}
%
%
%
%
%

%
Recall the following equivalence relation. 

\begin{lemma}[Theorem 5.2 \cite{Y2}]\label{lem12}
Let $F$ be a codimension one foliation on a closed manifold $M$. 
Then the following are equivalent: 
\\
1) 
$\F$ is pointwise almost periodic. 
\\
2)
$\F$ is $R$-closed. 
\\
3) 
$\F$ is minimal or compact. 
\end{lemma}

%
Note that there is 
a pointwise almost periodic codimension two real-analytic foliation on $\T^3$ which is not $R$-closed. 
Indeed, 
consider a diffeomorphism $f : (\R/\Z)^2 \to (\R/\Z)^2$ defined by $f (x, y) := (x + \sin (2 \pi y ), y )$. 
Then the suspension foliation of $f$ on $\T^3$ is desired. 
Moreover, there are compact codimension $q > 2$ foliations on compact manifolds 
which are not $R$-closed \cite{Su,EV,V3}. 
We also obtain the following statement. 

\begin{theorem}
Let $\F$ be a transversely orientable codimension one $C^2$ foliation 
on a closed manifold $M$. 
The following statement are equivalent: 

1) 
$\F$ is without holonomy 
and has a leaf whose fundamental group has polynomial growth. 

2) 
$\F$ is a pointwise almost periodic foliation 
without vanishing cycles 
such that 
the fundamental groups of all leaves have 
the same polynomial growth. 
\end{theorem}

\begin{proof}
Suppose that $\F$ is without holonomy and 
and has a leaf whose fundamental group has polynomial growth. 
By Theorem 6 \cite{Sa}, 
it is topologically conjugate to a foliation defined by a closed one-form. 
In particular, all leaves are diffeomorphic to each other.   
By Theorem 1 \cite{T}, 
the foliation $\F$ is either minimal or compact. 
By Theorem 5.1 \cite{N}, 
each leaf is $\pi_1$-injective. 
This implies the non-existence of vanishing cycles.  
Conversely, 
suppose that 
$\F$ is a pointwise almost periodic foliation 
without vanishing cycles such that 
the fundamental groups of all leaves have 
the same polynomial growth. 
By Lemma \ref{lem12}, 
the foliation $\F$ is either compact or minimal. 
If $\F$ is compact, 
then the transverse orientability 
implies the triviality of holonomy. 
Thus we may assume that $\F$ is minimal. 
Since the union of the leaves without holonomy is dense G$_{\delta}$ \cite{EMT}, 
there is a leaf $ L$ of $\mathcal{F}$ without holonomy. 
Proposition 5.1 \cite{Y4} implies the triviality of holonomy. 
\end{proof}

%

The $C^2$ condition  
and the growth condition are necessary, 
because the Denjoy foliation on $\T^2$ is a non-$R$-closed foliation without holonomy 
and because 
there is a transversely orientable codimension one real-analytic minimal foliation 
with non-trivial holonomy 
such that each leaf has a fundamental group with exponential growth (e.g. Example 3.2 \cite{YT}). 
Note that 
the condition ``the fundamental groups of all leaves have the same polynomial growth'' 
in the previous theorem is necessary, 
because there is a codimension one minimal foliation on a closed $3$-manifold with non-trivial holonomy 
each of whose leaves is either toral or planar. 
In fact, the weakly (un)stable foliation of a transitive Anosov flow on a closed $3$-manifold is desired. 
Moreover, the polynomial growth condition is necessary,  
because there is a codimension one real-analytic minimal foliation on a closed $3$-manifold 
with non-trivial holonomy such that 
all leaves are diffeomorphic to each other (e.g. Example 3.2 \cite{YT}). 
The author would like to know whether 
the pointwise almost periodicity can be replaced in the previous theorem with the non-wandering property. 
In other words, one would like to know a following question. 

\begin{question}\label{q01}
Is there a non-wandering codimension one foliation on a closed manifold 
without vanishing cycles which is not pointwise almost periodic  
such that the fundamental groups of all leaves have the same polynomial growth? 
\end{question}

\section{Codimension one  non-wandering foliations on $3$-manifolds}

\subsection{Properties of non-wandering foliations}

%
The properties of foliations on $3$-manifolds are different from 
those on surfaces. 
For instance, 
in \cite{H}, 
the author has constructed 
a codimension one continuous non-wandering foliation on a closed $3$-manifold $M$
such that 
$\overline{\mathrm{LD}} = \overline{\mathrm{E}} = M = \mathrm{LD} \sqcup \mathrm{E}$ 
(resp. $\mathrm{E} = M$). 
%
This shows that 
$\mathrm{LD}$ (resp. $\mathrm{\mathrm{E}} \sqcup \mathrm{Cl}$) is neither open nor closed in general. 
%
Now
we state 
some properties 
of non-wandering codimension one 
foliations. 

\begin{lemma}\label{lem:25}
A codimension one non-wandering foliation on a connected closed $3$-manifold $M$ 
is $\pi_1$-injective and Reebless. 
Moreover if $\pi_2(M)$ is trivial, then the universal cover of $M$ is homeomorphic to $\R^3$. 
\end{lemma}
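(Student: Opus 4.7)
The plan is to reduce both conclusions to classical structural theorems (Novikov and Palmeira) by first showing that the non-wandering hypothesis prevents $\F$ from containing a Reeb component.

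First I would check that non-wandering implies Reebless. Suppose for contradiction that $\F$ has a Reeb component $R$ with boundary torus $T$. Every interior leaf $L \subset \mathrm{int}\,R$ is a plane that spirals onto $T$, so $\overline{L} = L \cup T$. Then $\overline{L} - L = T$ is closed, so $L$ is proper, and since $\overline{L} \neq L$ the leaf $L$ lies in $P$. The entire open set $\mathrm{int}\,R \setminus T$ consists of such leaves, contradicting $\mathrm{int}\,P = \emptyset$. Hence $\F$ is Reebless, and Novikov's theorem (in its continuous incarnation, via Solodov or Siebenmann) yields $\pi_1$-injectivity of $\F$.

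For the second assertion, I would lift $\F$ to the foliation $\widetilde{\F}$ on the universal cover $\widetilde{M}$. By $\pi_1$-injectivity, each leaf $\widetilde{L}$ of $\widetilde{\F}$ is the universal cover of the corresponding leaf of $\F$, hence a simply connected surface. The condition $\pi_2(M) = 0$ (combined with the sphere theorem, which precludes embedded essential spheres in a Reebless situation) rules out leaves homeomorphic to $S^2$, so every leaf of $\widetilde{\F}$ is homeomorphic to $\R^2$. At this point $\widetilde{M}$ is a simply connected $3$-manifold equipped with a codimension one foliation all of whose leaves are planes, and Palmeira's theorem gives $\widetilde{M} \cong \R^3$.

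The routine step is the Reeb-component exclusion, which is essentially a direct unpacking of the definitions of $P$ and non-wandering. The main obstacle lies in invoking the classical machinery at the correct regularity: Novikov's theorem and Palmeira's theorem are traditionally stated for $C^2$ foliations, and one must appeal to the continuous versions (and confirm that $\pi_2(M) = 0$ together with Reeblessness suffices to rule out spherical leaves of $\widetilde{\F}$) in order to apply them to the merely continuous $\F$ considered here.
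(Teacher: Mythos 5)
Your proposal follows essentially the same route as the paper: non-wandering excludes Reeb components, the continuous ($C^0$) version of Novikov's theorem (via Solodov) then gives $\pi_1$-injectivity, and the $\pi_2(M)=0$ hypothesis yields $\widetilde{M}\cong\R^3$ via the Palmeira-type argument (which is exactly the content of the result of Parwani that the paper cites). Your explicit verification that interior leaves of a Reeb component form an open set of non-closed proper leaves, contradicting $\mathrm{int}\,P=\emptyset$, is a correct unpacking of the step the paper leaves implicit.
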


\begin{proof}
Let $\F$ be a codimension one non-wandering foliation on a connected closed $3$-manifold $M$. 
By non-wandering property, 
there are no Reeb components. 
By the $C^0$ Novikov Compact Leaf theorem \cite{So}, 
there are no vanishing cycles. 
By Th3.4.VIII \cite{HH}, 
we have that 
$\F$ is $\pi_1$-injective. 
Suppose that 
$\pi_2(M)$ is trivial. 
By Corollary 2.4 \cite{P}, 
the universal cover of $M$ is homeomorphic to $\R^3$. 
\end{proof}

This implies the non-existence 
of  codimension one non-wandering foliations 
on homological spheres. 

\begin{corollary}
There are no codimension one non-wandering foliations on 
homology $3$-spheres.
\end{corollary}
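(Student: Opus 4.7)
My plan is a proof by contradiction: suppose such a foliation $\F$ exists on a homology $3$-sphere $M$. The argument will directly leverage Lemma~\ref{lem:25}, supplemented by standard facts about $3$-manifold topology.

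First I would verify the second hypothesis of Lemma~\ref{lem:25} for our $M$, namely $\pi_2(M)=0$. For a homology $3$-sphere this reduces to the irreducible case via the sphere theorem; to rule out a non-trivial connected-sum decomposition I would intersect $\F$ with a reducing $2$-sphere and run a standard Novikov-style disk-trading argument, which produces a vanishing cycle and so contradicts the Reeb-less property already extracted in the proof of Lemma~\ref{lem:25}. With $\pi_2(M)=0$ in hand, Lemma~\ref{lem:25} forces $\widetilde M \cong \R^3$, so in particular $\pi_1(M)$ is infinite; and $H_1(M;\Z)=0$ makes it perfect.

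Next I would lift $\F$ to a $\pi_1(M)$-invariant foliation $\widetilde{\F}$ of $\widetilde M = \R^3$. Since $\F$ is $\pi_1$-injective (Lemma~\ref{lem:25}) and $\pi_1(\R^3)=1$, every leaf of $\widetilde{\F}$ is simply connected; a sphere leaf is excluded because Reeb stability would then produce a global $S^2$-bundle structure on $M$, inconsistent with $M$ being a homology sphere. Therefore every leaf of $\widetilde{\F}$ is a plane, and Palmeira's theorem conjugates $(\R^3,\widetilde{\F})$ to the standard horizontal product foliation of $\R^2\times\R$. The free cocompact action of $\pi_1(M)$ on $\R^3$ then descends to a faithful action on the leaf line $\R$, and a translation-number argument produces a non-trivial homomorphism $\pi_1(M)\to\R$, contradicting perfectness.

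The main obstacle I anticipate is this final step: arranging the action of $\pi_1(M)$ on the leaf space so that the translation-number homomorphism is genuinely non-trivial is delicate in the $C^0$ setting, where leaf-space holonomy can be pathological and the leaf space itself may fail to be Hausdorff (requiring a more careful order-tree version of Palmeira). A conceptually cleaner route I would keep in reserve is to promote non-wandering to tautness and invoke the fact that a taut codimension-one foliation on a closed orientable $3$-manifold yields a non-zero class in $H^2(M;\R)\cong H_1(M;\R)=0$, an immediate contradiction; the cost is that upgrading Reeb-less-ness to tautness in the continuous category requires a $C^0$ analogue of Gabai's theorem, and invoking that is essentially the same obstacle wearing a different hat.
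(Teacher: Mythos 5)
The paper treats this corollary as an immediate consequence of Lemma~\ref{lem:25}: granting $\pi_2(M)=0$, the universal cover of $M$ would have to be $\R^3$, which rules out every homology sphere with finite fundamental group, since there the universal cover is the closed manifold $S^3$. Your first paragraph reconstructs exactly this step and is fine as far as it goes. The trouble is everything after ``so $\pi_1(M)$ is infinite'': that is precisely the case the paper's one-line justification does not reach, and your attempt to close it does not work.

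The fatal step is the translation-number argument. Even when the leaf space of $\widetilde{\F}$ is a genuine Hausdorff line, an action of $\pi_1(M)$ on $\R$ by orientation-preserving homeomorphisms yields no canonical homomorphism to $\R$: translation number is a homomorphism only for free actions (H\"older's theorem) or actions by translations, and perfect groups act faithfully on $\R$ without obstruction --- any perfect left-orderable group does. This is not a repairable technicality: the Brieskorn sphere $\Sigma(2,3,7)$ is an integral homology sphere whose perfect fundamental group acts faithfully and minimally on the line, and the corresponding horizontal foliation transverse to the Seifert fibration is a smooth minimal codimension-one foliation, hence recurrent and in particular non-wandering in the sense of this paper. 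So no argument of the shape you propose can succeed once $\pi_1(M)$ is infinite. Your fallback route is also unsound: a taut foliation carries a closed $2$-form positive on leaves, but that form is forced to represent a nonzero class only when it pairs with a foliation cycle (for instance a compact leaf); on a homology sphere the form is simply exact, and taut foliations on integral homology spheres are plentiful. The honest conclusion is that only the finite-$\pi_1$ case follows from Lemma~\ref{lem:25}, and the infinite-$\pi_1$ case should be flagged as a genuine gap rather than papered over.
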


From now on, 
we consider $C^2$ foliations on closed $3$-manifolds.

%
%
%
%
%

\begin{theorem}\label{prop:24}
Let $\F$ be a codimension one  $C^2$ foliation on a closed connected $3$-manifold $M$.  
Suppose there are no leaves of $\F$ whose ends are uncountable.  
Then the following are equivalent: 
\\
1) 
$\F$ is $R$-closed. 
\\
2)
$\F$ is either minimal or compact. 
\\
3) 
$\F$ is non-wandering such that 
either $\mathrm{Cl} = \emptyset$ or $\mathrm{LD} = \emptyset$. 
\end{theorem}

\begin{proof} 
By Lemma \ref{lem12}, 
we have that 
1) and 2) are equivalent. 
Suppose that 
$\F$ is minimal or compact. 
Then $\F$ is non-wandering 
and has either no compact leaves or no locally dense leaves. 
Conversely, 
suppose that 
$\F$ is non-wandering such that 
either $\mathrm{Cl} = \emptyset$ or $\mathrm{LD} = \emptyset$. 
%
By the Duminy theorem for ends \cite{CC2}, 
there are no exceptional leaves. 
If there are no compact leaves, 
then the minimal set is the whole manifold $M$ 
and so $\F$ is minimal. 
Thus we may assume that there are no locally dense leaves. 
Then $M = \mathrm{Cl} \sqcup \mathrm{P}$. 
Assume   $\F$ is not compact. 
Since the union of compact leaves are closed, 
the union $\mathrm{P} = M - \mathrm{Cl}$ of non-compact leaves are nonempty open 
and consists of non-compact proper leaves. 
This contradicts to non-wandering property. 
Thus $\F$ is compact. 
\end{proof}

Note that a codimension one minimal foliation on a closed $3$-manifold 
need not have trivial holonomy.  
In fact, there is a codimension one real-analytic minimal foliation on a closed $3$-manifold 
with non-trivial holonomy 
such that all leaves are diffeomorphic to each other (e.g. Example 3.2 \cite{YT}). 
The polynomial growth of the fundamental group of a manifold implies the following statement. 

\begin{theorem}
Let $\F$ be a codimension one transversely orientable 
$C^2$ foliation on a connected closed $3$-manifold $M$ whose 
fundamental group $\pi_1(M)$ has polynomial growth.
Then the following are equivalent: 
\\
1) 
$\F$ is minimal $($resp. compact$)$. 
\\
2)
$\F$ is non-wandering such that 
$\mathrm{Cl} = \emptyset$ $($resp. $\mathrm{LD} = \emptyset )$. 
\end{theorem}

\begin{proof}
By definitions, we have 
$M = \mathrm{Cl} \sqcup \mathrm{P} \sqcup \mathrm{LD} \sqcup \mathrm{E}$. 
Obviously, the condition 1) implies the condition 2). 
Conversely, 
suppose that 
$\F$ is non-wandering such that 
$\mathrm{Cl} = \emptyset$ (resp. $\mathrm{LD} = \emptyset$). 
The non-wandering property implies that 
there are no Reeb components and so each leaf is $\pi_1$-injective. 
Since the fundamental group of $M$ has polynomial growth, 
so is one of each leaf. 
Since each leaf is a surface and the fundamental group has polynomial growth,  
it has at most two ends. 
By the Duminy theorem for ends \cite{CC2}, 
there are no exceptional leaves. 
Then $M = \mathrm{P} \sqcup \mathrm{LD}$ 
(resp.  $M = \mathrm{Cl} \sqcup \mathrm{P}$) 
and each minimal set consists of locally dense leaves (resp. a compact leaf). 
Since the closure of a non-closed proper leaf contains no locally dense leaves 
(resp. $\mathrm{Cl}$ is closed), 
we have $M = \mathrm{LD}$ (resp. $M = \mathrm{Cl}$).  
This implies that 
$\F$ is minimal (resp. compact). 
\end{proof}

\subsection{On trivial holonomies for non-wandering foliations}

By a Novikov's result \cite{N}, 
the existence of vanishing cycles in a closed $3$-manifold 
implies the existence of Reeb components. 
Theorem \ref{prop:24} implies the following statement. 

\begin{lemma}\label{prop3-6a}
Let $\F$ be a codimension one transversely orientable 
$C^2$ foliation on a closed $3$-manifold $M$.  
Suppose that the fundamental groups of all leaves have the same polynomial growth. 
Then the following are equivalent: 
\\
1) 
$\F$ is without holonomy. 
\\
2)
$\F$ is non-wandering   
such that either $\mathrm{Cl} = \emptyset$ or $\mathrm{LD} = \emptyset$. 
\end{lemma}

\begin{proof}
Suppose that $\F$ is without holonomy. 
By Theorem 1 \cite{T}, 
the foliation $\F$ is either minimal or compact 
and so the assertion holds. 
Conversely, 
suppose that 
$\F$ is non-wandering  
 such that 
either $\mathrm{Cl} = \emptyset$ or $\mathrm{LD} = \emptyset$. 
Since the fundamental group of each leaf $L$ of $\F$ has polynomial growth, 
the surface $L$ has at most two punctured and so the end of $L$ is countable. 
By Theorem \ref{prop:24}, 
the foliation $\F$ is either minimal or compact. 
By Theorem 6 \cite{Sa}, 
all leaves are diffeomorphic to each other.   
Since each compact codimension one transversely orientable foliation on a closed manifold has 
trivial holonomy, 
we may assume that $\F$ is minimal. 
By Theorem 5.1 \cite{N}, 
each leaf is $\pi_1$-injective. 
Since the union of the leaves without holonomy is dense G$_{\delta}$ \cite{EMT}, 
there is a leaf $ L$ of $\mathcal{F}$ without holonomy. 
Proposition 5.1 \cite{Y4} implies that 
$\F$ is without holonomy. 
\end{proof}

It's well known that 
a surface without boundaries whose fundamental group has polynomial growth is 
either an open disk, a sphere, a real projective plane, 
an open annulus, a M\"obius band, 
a torus, or a Klein bottle (cf. \cite{Mi}). 
Moreover 
the polynomial growths of the fundamental groups of 
an open disk, a sphere,  and a real projective plane are zero, 
those of an annulus and a M\"obius band are one, and 
those of a torus and a Klein bottle are two. 
The facts implies the negative answer of Question \ref{q01} 
in the three dimensional case. 

\begin{theorem}\label{prop3-6}
Let $\F$ be a codimension one transversely orientable 
$C^2$ foliation on a connected closed $3$-manifold $M$. 
Then the following are equivalent: 
\\
1) 
The fundamental groups of all leaves have the same polynomial growth. 
\\
2)
$\F$ is without holonomy and has a leaf whose fundamental group has polynomial growth. 
\end{theorem}

\begin{proof} 
Suppose that 
$\F$ is without holonomy and has a leaf whose fundamental group has polynomial growth. 
By Theorem 6 \cite{Sa}, all leaves are diffeomorphic to each other 
and so the fundamental groups of all leaves have the same polynomial growth.   
%
Conversely, 
suppose that the fundamental groups of all leaves have the same polynomial growth. 
Let $k$ be the polynomial growths of the fundamental groups of leaves. 
By Theorem 1 and 2 \cite{Mi}, the polynomial growth $k$ is either zero, one, or two. 
Suppose that $k = 0$.  
The transversely orientability of $\F$ implies that $\F$ has no finite holonomy 
and so $\F$ is without holonomy. 
Suppose that $k = 1$.  
Then each leaf is either an annulus and a M\"obius band. 
By the Duminy theorem for ends \cite{CC2}, there are no exceptional leaves. 
This implies each minimal set is locally dense.  
Since the closure of a non-closed proper leaf contains no locally dense leaves 
we have that $M = \mathrm{LD}$ and so $\F$ is non-wandering. 
Lemma \ref{prop3-6a} implies that $\F$ is without holonomy.  
Thus we may assume that $k = 2$. 
Then each leaf is compact and so $\F$ is a compact foliation. 
Since a compact codimension one transversely orientable foliation is without holonomy, so is $\F$. 
\end{proof}

\section{Examples}

The following example shows that 
the countable condition in Theorem \ref{prop:24} is necessary 
and that 
pointwise almost periodicity  does not correspond to  recurrence. 

\begin{example}\label{prop2.3}
There is a smooth codimension one recurrent  foliation $\F$ 
without compact leaves on a closed $3$-manifold $\Sigma_4 \times \S^1$ 
which is not pointwise almost periodic 
such that $\F$ consists of exceptional leaves and locally dense leaves, 
where $\Sigma_k$ is the genus $k$ closed orientable surface. 
\end{example}

\begin{proof}
Let 
$G$ be the group generated by a circle diffeomorphisms $f, g$ 
in \cite{Sa} with a unique Cantor minimal set 
$\mathcal{M}$
and  
$f_1, f_2: (1/3, 2/3) \to (1/3, 2/3)$ smooth diffeomorphisms such that 
each orbit of the group generated by $f_1, f_2$ is dense. 
Note that 
$(1/3, 2/3)$ is a connected component of $\S^1 - \mathcal{M}$. 
We can choose $f_1, f_2$ such that 
the extensions of $f_i$ are 
circle smooth diffeomorphism $F_i: \S^1 \to \S^1$ 
whose supports are $(1/3, 2/3) \subset \S^1$, 
where $\S^1 = \R/\Z$. 
Consider the product foliation $\{ \Sigma_4 \times \{ x \} \mid x \in \S^1 \}$ 
and four disjoint loops $\gamma_f, \gamma_g, \gamma_1, \gamma_2$ 
in $\Sigma_4$ such that $\Sigma_4 - \sqcup_{i \in \{ f, g,1,2 \} } \gamma_i$ is a punctured disk. 
Taking holonomy maps $\mathrm{id} \times F_i: \gamma_i \times \S^1 \to \gamma_i \times \S^1$ 
for a circle bundle over $\Sigma$, 
we obtain 
a codimension one foliation $\F$ 
such that each leaf is exceptional or locally dense. 
Therefore $\F$ is not pointwise almost periodic but recurrent. 
\end{proof}

The following example shows that 
recurrence does not correspond to non-wandering property. 

\begin{example}\label{prop2.4}
There is a smooth codimension one non-wandering foliation $\F$  on $\Sigma_3 \times \S^1$  
which is not recurrent. 
\end{example}

\begin{proof}
%
%

Consider a minimal $\Z^2$-action generated by two 
orientation-preserving diffeomorphisms $f$ and $g$ on $(0, 1)$ 
(e.g. diffeomorphisms which are conjugate to translations on $\R$) 
and extend 
$f, g: \R \to \R$ into diffeomorphisms such that  
$f(x + 1) = f(x) + 1$ 
and 
$g(x + 1) = g(x) + 1$. 
Define $h: \R \to \R$ by $h(x) = x + 1$. 
Extend $f, g, h$ into diffeomorphisms by adding common fixed point 
$\infty$.  
The resulting underlying manifold $\S^1_{\infty} = \R \sqcup \{ \infty \}$ 
is the one-point compactification of $\R$. 
Then the resulting foliated bundle $(M, \F)$ is desired, 
where $M := \Sigma_3 \times \S^1_{\infty}$.  
Indeed, 
the leaf class space $M /\hat{\F}$ consists of 
three points $\hat{L}_{\infty}, \hat{L}_{\mathrm{op}}, \hat{L}_{\Z}$ 
such that 
$\hat{L}_{\infty}$ is a closed point, 
$\hat{L}_{\mathrm{op}}$ is an open point with $\overline{\hat{L}_{\mathrm{op}}} = M /\hat{\F}$, 
and 
$\hat{L}_{\Z}$ is neither a closed point nor an open point. 
\end{proof}

Notice that 
there are codimension one Reebless foliation on $\T^3$ 
which is wandering. 
Indeed, 
consider a non-trivial translations $f$. 
Adding the ideal point to $\R$ as a fixed point, 
we obtain a smooth homeomorphism $\widetilde{f}$
on the circle $\S^1_{\infty}$. 
Consider the suspension foliation $\F_1$ on a torus $\T^2$ 
and a product foliation $\F_2 := \{ L \times \S^1 \mid L \in \F_1 \}$ on a torus $\T^3$. 
Then the resulting  foliation is wandering but Reebless.

\end{document}